\documentclass{article}
\vsize=18.5cm
\hsize=13.2cm
\oddsidemargin5pt\evensidemargin5pt\textwidth450pt\textheight570pt
\usepackage{amsmath}
\usepackage{amssymb}
\usepackage{amscd}
\usepackage{multicol}
\usepackage{color}
\usepackage{amsthm,mathtools}
\mathtoolsset{showonlyrefs}

\begin{document}
\def\ni{\noindent}
\def\t{\theta}
\def\O{\Omega}
\def\S{\Sigma}
\def\e{\epsilon}
\def\lra{\longrightarrow}
\def\R{{\mathbb R}}
\def\N{{\mathbb N}}
\def\Z{{\mathbb Z}}
\def\RR{{{\mathbb R}}^2}
\def\MS{M^{2{\rm x}2}_s}
\def\N{{\bf N}}
\def\l{\lambda}
\def\LL{${\cal L}$}
\def\E{{\cal E}}
\def\a{{\alpha}}
\def\A{A_{\a}}
\def\ta{\t^{\a}}
\def\rot{{\rm rot}}

\newcommand{\vs}[1]{\vskip #1pt}

\newtheorem{Theorem}{Theorem}[section]
\newtheorem{Definition}[Theorem]{Definition}
\newtheorem{corollary}[Theorem]{Corollary}
\newtheorem{proposition}[Theorem]{Proposition}
\newtheorem{examples}[Theorem]{Esempi}
\newtheorem{example}[Theorem]{Example}
\newtheorem{lemma}[Theorem]{Lemma}
\newtheorem{remark}[Theorem]{Remark}

\catcode`\@=11


   \renewcommand{\theequation}{\thesection.\arabic{equation}}
   \renewcommand{\section}%
   {\setcounter{equation}{0}\@startsection {section}{1}{\z@}{-3.5ex  
plus -1ex
    minus -.2ex}{2.3ex plus .2ex}{\Large\bf}}

\title{\bf Unbounded solutions to systems of differential equations at resonance\thanks{Under the auspices of GNAMPA-I.N.d.A.M., Italy.}
}
\author{A. Boscaggin, W. Dambrosio and D. Papini}

\date{}
\maketitle

\begin{center}
Dedicated to the memory of Russell Johnson.
\end{center}

\vs{12}

\begin{quote}
\small
{\bf Abstract.} We deal with a weakly coupled system of ODEs of the type
$$
x_j'' + n_j^2 \,x_j + h_j(x_1,\ldots,x_d) = p_j(t), \qquad j=1,\ldots,d,
$$
with $h_j$ locally Lipschitz continuous and bounded, $p_j$ continuous and $2\pi$-periodic, $n_j \in \mathbb{N}$ (so that the system is at resonance). By means of a Lyapunov function approach for discrete dynamical systems, we prove the existence of unbounded solutions, when
either global or asymptotic conditions on the coupling terms $h_1,\ldots,h_d$ are assumed.
\medbreak
\noindent
{\bf Keywords.} Systems of ODEs, unbounded solutions, resonance, Lyapunov function.

\noindent
{\bf AMS Subject Classification.} 34C11, 34C15.
\end{quote}

\section{Introduction}

In this paper, we deal with the existence of unbounded solutions for weakly coupled systems of ODEs of the type
\begin{equation}\label{sys-intro}
\begin{cases}
\, x''_1 + n_1^2 \,x_1 + h_1(x_1,\ldots,x_{d}) = p_1(t), \vspace{7pt}\\
\, x''_2 + n_2^2 \,x_2 + h_2(x_1,\ldots,x_{d}) = p_2(t), \vspace{7pt}\\
\qquad \vdots \\
\, x''_d + n_d^2 \,x_d + h_d(x_1,\ldots,x_{d}) = p_d(t),
\end{cases}
\end{equation}
where the functions $h_1,\ldots,h_d: \mathbb{R}^d \to \mathbb{R}$ are locally Lipschitz continuous and bounded and the functions
$p_1,\ldots,p_d: \mathbb{R} \to \mathbb{R}$ are continuous and periodic with the same period, say $2\pi$ for simplicity.
We will also assume that 
\begin{equation}\label{res-intro}
n_j \in \mathbb{N} \quad \mbox{ for every } j \in \{1,\ldots,d\},
\end{equation}
implying, as well-known, that the scalar equation $x''_j + n_j^2 x_j = 0$ has a nontrivial $2\pi$-periodic solution
(in fact, all its nontrivial solutions are $2\pi$-periodic). Following a popular terminology (cf. \cite{Maw07}), system \eqref{sys-intro} is thus said to be at resonance.

In the scalar case (that is, $d=1$), the problem of the existence of unbounded solutions has been considered since the nineties.
Indeed, the first result can be essentially traced back to Seifert \cite{Sei90}, establishing the existence of unbounded solutions to the equation
\begin{equation}\label{eq-intro}
x'' + n^2 x + h(x) = p(t), \qquad x \in \mathbb{R},
\end{equation}
as a consequence of a non-existence result for $2\pi$-periodic solutions by Lazer and Leach \cite{LazLea69} together with the classical Massera's theorem. Later on, sharper conclusions were obtained by Alonso and Ortega in \cite{AloOrt96}. In particular, according to \cite[Proposition 3.1]{AloOrt96}, any solution of \eqref{eq-intro} is unbounded both in the past and in the future whenever 
\begin{equation}\label{glob-intro}
2 \left( \sup h - \inf h \right) \leq \left\vert \int_0^{2\pi} p(t) e^{\textnormal{i} n t}\,dt\right\vert;
\end{equation}
moreover, due to \cite[Proposition 3.4]{AloOrt96}, any sufficiently large solution is unbounded either in the past or in the future when the global condition \eqref{glob-intro} is replaced by the (weaker) asymptotic assumption 
\begin{equation}\label{asy-intro}
2 \left\vert \max\left\{ \limsup_{x \to +\infty}h(x), \limsup_{x \to -\infty}h(x)\right\} - \min\left\{ \liminf_{x \to +\infty}h(x), \liminf_{x \to -\infty}h(x)\right\} \right\vert < \left\vert\int_0^{2\pi} p(t) e^{\textnormal{i} n t}\,dt\right\vert.
\end{equation}
Both results were proved via an abstract method, based on the use of Lyapunov-like functions, developed in the same paper. 
Generalizations of this kind of results to more general situations (like asymmetric oscillators and planar Hamiltonian systems)
were then obtained by many authors (see, among others, \cite{AloOrt98,CapDamMaWan13,CapDamWan08,Dam02,FabFon05,FabMaw00,KunKupLiu01,LiuTorQia15,MaWan13,Maw07,Yan04} and the references therein). 

Yet, as far as we know, the boundedness problem for system \eqref{sys-intro} is rather unexplored.
The aim of the present paper is to provide some results in this direction, by investigating to what extent the method developed in \cite{AloOrt96} could be successfully applied to systems of second order equations. 

Roughly speaking, we will show in Section \ref{sec-2} that a first result, Theorem \ref{teo-glob-1}, can be easily achieved
when appropriate global conditions are imposed on the coupling terms $h_1,\ldots,h_d$. 
In such a situation, the resonance assumption \eqref{res-intro} can be even weakened into
$$
n_j \in \mathbb{N} \quad \mbox{ for some } j \in \{1,\ldots,d\}.
$$
Moreover, with the same approach, it is possible to consider a genuinely vectorial problem like
$$
x'' + Ax + h(x) = p(t), \qquad x \in \mathbb{R}^d,
$$
where $h: \mathbb{R}^d \to \mathbb{R}^d$, $p: \mathbb{R}\to \mathbb{R}^d$ and $A$ is a symmetric, positive definite, $d \times d$ matrix which has $n^{2}$ as eigenvalue, with $n \in \mathbb{N}$
(see Theorem \ref{teo-glob-2}).

The possibility of obtaining results with asymptotic assumptions on the functions $h_1,\ldots,h_d$ is then studied in Section \ref{sec-3}.
As expected, this is a much more delicate issue, since the coupling between the equations plays an essential role. It seems then necessary 
to focus the attention on quite specific choices for the coupling terms $h_1,\ldots,h_d$; we will investigate in details the case of the cyclic coupling
\begin{equation}\label{cyc-intro}
h_j(x_{1},\dots,x_{d}) = h_j(x_{j+1})
\end{equation}
and of the radial coupling
\begin{equation}\label{rad-intro}
h_j(x_{1},\dots,x_{d}) = h_j\left(\sqrt{x_1^2 + \cdots + x_d^2}\right).
\end{equation}
We refer to Theorem \ref{teo-asy-1} and Theorem \ref{teo-asy-2} for the precise statements; notice that some care must be taken to describe the sets of solutions which are unbounded.

Let us finally recall that, for scalar second order equations at resonance, the existence of unbounded solutions
is strictly related to the existence of periodic solutions, and these problems are often considered together. In this regard, we mention that, despite some recent results obtained about periodic solutions for weakly coupled systems of ODEs (see, for instance, \cite{BosOrt14,FonSfe12,FonSfe16,FonUre17,LiuQia20} and the references therein), the existence of periodic solutions for systems like
\eqref{sys-intro} under the resonance assumption \eqref{res-intro} seems to be a quite open issue (we are just aware of \cite{AmsDen09}).

\medbreak
\noindent
\textbf{Notation.} Throughout the paper, the symbol $| \cdot |$ will be freely used to denote the absolute value of a real number, the modulus of a complex number or the Euclidean norm of a $k$-dimensional vector for $k \leq d$ (the specific meaning will be clear from the context).
We also denote $\mathbb{N}=\{1,2,3,\dots\}$.

\section{A global result}\label{sec-2}

In this section we prove the existence of unbounded solutions for system \eqref{sys-intro} under global assumptions on the coupling terms
$h_1,\ldots,h_d$. For briefness, from now on we write \eqref{sys-intro} in compact form as
\begin{equation}\label{sys-global}
x''_j + n_j^2 \,x_j + h_j(x) = p_j(t), \qquad j= 1,\ldots,d,
\end{equation}
where $ x =( x_{1},\dots,x_{d}) $, and we assume $n_j > 0$, $h_j: \mathbb{R}^d \to \mathbb{R}$ locally Lipschitz continuous and bounded and $p_j: \mathbb{R} \to \mathbb{R}$ continuous and $2\pi$-periodic (for every $j$).
Our result reads as follows.

\begin{Theorem} \label{teo-glob-1}
	In the previous setting, assume that 
	\begin{equation} \label{eq-ipo-glob-1}
	n_j \in \mathbb{N} \qquad \mbox{ and } \qquad 2 (\sup h_j-\inf h_j) < \left\vert \int_0^{2\pi} p_j(t) e^{\textnormal{i} n_j t}\,dt\right\vert\qquad \text{for some } j \in \{1,\ldots,d\}.
	\end{equation}
	Then, for every solution $x$ of \eqref{sys-global} it holds that
	\begin{equation} \label{eq-tesi-glob-1}
	\lim_{|t|\to +\infty} (x_j(t)^2+x'_j(t)^2)=+\infty.
	\end{equation}
\end{Theorem}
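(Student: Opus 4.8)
The plan is to reduce the statement to the scalar resonance analysis of Alonso--Ortega applied to the single $j$-th equation, treating the coupling term $h_j(x(\cdot))$ as a merely bounded forcing. Fix $j$ as in \eqref{eq-ipo-glob-1} and let $x=(x_1,\dots,x_d)$ be any solution of \eqref{sys-global}. Then the component $x_j$ solves the scalar equation $x_j''+n_j^2 x_j=f(t)$ with $f(t):=p_j(t)-h_j(x(t))$ continuous and bounded. First I would pass to the complex variable $w:=x_j'+\textnormal{i}n_j x_j$, which satisfies $w'=\textnormal{i}n_j w+f$, so that $\zeta:=e^{-\textnormal{i}n_j t}w$ obeys $\zeta'=e^{-\textnormal{i}n_j t}f(t)$ and $|\zeta(t)|=|w(t)|$. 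Since $n_j>0$, one has $|w(t)|^2=x_j'(t)^2+n_j^2 x_j(t)^2$, which is comparable to $x_j(t)^2+x_j'(t)^2$; hence \eqref{eq-tesi-glob-1} is equivalent to $|\zeta(t)|\to+\infty$ as $|t|\to+\infty$.

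Next I would study the discrete sequence $\zeta_k:=\zeta(2\pi k)$, $k\in\mathbb{Z}$. Integrating $\zeta'$ over one period and using that $n_j$ is an integer together with the $2\pi$-periodicity of $p_j$, the increments take the form $\zeta_{k+1}-\zeta_k=P-H_k$, where $P:=\int_0^{2\pi}e^{-\textnormal{i}n_j s}p_j(s)\,ds$ (so $|P|$ equals the right-hand side of \eqref{eq-ipo-glob-1}) and $H_k:=\int_{2\pi k}^{2\pi(k+1)}e^{-\textnormal{i}n_j s}h_j(x(s))\,ds$. Because $\int_{2\pi k}^{2\pi(k+1)}e^{-\textnormal{i}n_j s}\,ds=0$, I may subtract the midpoint constant $c:=\tfrac12(\sup h_j+\inf h_j)$ from $h_j$ without changing $H_k$, so that the integrand has modulus at most $\tfrac12(\sup h_j-\inf h_j)$. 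The crucial --- and, I expect, most delicate --- point is then the sharp bound $|H_k|\le 2(\sup h_j-\inf h_j)$: writing $|H_k|=\sup_\phi\int_{2\pi k}^{2\pi(k+1)}\cos(n_j s-\phi)\,(h_j(x(s))-c)\,ds$ and estimating, this reduces to the identity $\int_{2\pi k}^{2\pi(k+1)}|\cos(n_j s-\phi)|\,ds=4$, valid for every integer $n_j\ge1$ and every $\phi$. It is exactly this constant $4$ (rather than the crude $\int|\cos|\le 2\pi$) that produces the factor $2$ matching hypothesis \eqref{eq-ipo-glob-1}.

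With the estimate in hand, the hypothesis $|P|>2(\sup h_j-\inf h_j)$ says precisely that every increment $\zeta_{k+1}-\zeta_k$ lies in the closed disk centred at $P$ of radius $2(\sup h_j-\inf h_j)<|P|$, which does not contain the origin. Consequently all increments sit in the open half-plane $\{\,w:\operatorname{Re}(\overline{P}\,w)>0\,\}$, and the linear functional $\operatorname{Re}(\overline{P}\,\cdot)$ --- which plays the role of the Lyapunov function for the discrete dynamics $\zeta_{k+1}=\zeta_k+P-H_k$ --- increases at each step by at least $\delta:=|P|\big(|P|-2(\sup h_j-\inf h_j)\big)>0$, since $\operatorname{Re}\big(\overline{P}(\zeta_{k+1}-\zeta_k)\big)\ge|P|^2-|P|\,|H_k|\ge\delta$. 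Summing, $\operatorname{Re}(\overline{P}\,\zeta_k)\to\pm\infty$ as $k\to\pm\infty$, whence $|\zeta_k|\ge|\operatorname{Re}(\overline{P}\,\zeta_k)|/|P|\to+\infty$ in both directions. Finally, to pass from the integer times to the continuum I would note that $|\zeta(t)-\zeta_k|\le\int_{2\pi k}^{2\pi(k+1)}|f|\le 2\pi(\sup|p_j|+\sup|h_j|)$ is bounded uniformly in $k$ and $t\in[2\pi k,2\pi(k+1)]$, so $|\zeta(t)|\to+\infty$ as $|t|\to+\infty$; translating back through $|\zeta|=|w|$ gives \eqref{eq-tesi-glob-1}. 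The main obstacle is the sharp constant of the second paragraph; the half-plane argument is what prevents the bounded-below increments from cancelling, and it is essential that it works simultaneously for $k\to+\infty$ and $k\to-\infty$.
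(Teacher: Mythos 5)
Your proposal is correct and follows essentially the same route as the paper: your functional $\operatorname{Re}(\overline{P}\,\zeta_k)$ is exactly $|P|$ times the paper's Lyapunov function $V(\zeta,\eta)=\eta\sin\varphi_j-n_j\zeta\cos\varphi_j$ evaluated at $t=2k\pi$, and your sharp bound $|H_k|\le 2(\sup h_j-\inf h_j)$ via $\int_0^{2\pi}|\cos(n_js-\phi)|\,ds=4$ is the same per-period estimate the paper obtains by splitting $\sin=\sin^+-\sin^-$ and using $\int_0^{2\pi}\sin^{\pm}(n_jt+\varphi)\,dt=2$. The only minor divergence is the passage from the discrete times $2k\pi$ to all $t$, where your uniform bound on $|\zeta'|=|p_j-h_j(x)|$ is a simpler (and equally valid) substitute for the paper's Gronwall argument on the energy $E_j$.
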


Let us observe that this result is, basically, of scalar nature: indeed, assumption \eqref{eq-ipo-glob-1} involves only the $j$-th equation of system \eqref{sys-global} and, accordingly, the conclusion is for the $j$-th component of the vector solution $(x_1,\ldots,x_d)$. Of course, in 
the case when system \eqref{sys-global} is fully at resonance (meaning that \eqref{res-intro} is satisfied), and the global assumption
$2 (\sup h_j-\inf h_j) < \left\vert \int_0^{2\pi} p_j(t) e^{\textnormal{i} n_j t}\,dt\right\vert$ holds for every $j = 1,\ldots,d$, then it follows that all the components of any solution of \eqref{sys-global} are unbounded, both in the past and in the future.

\begin{remark} \label{oss-coeffFourier}
	We recall the following fact (cf. \cite[Lemma 3.2]{AloOrt96}) which will be used several times throughout the paper: for every integrable function $q$ and every $n\in \mathbb{N}$, it holds that
	\begin{equation} \label{eq-coeffFourier}
	\left \vert \int_0^{2\pi} q(t) e^{\textnormal{i} n t}\,dt\right\vert=\max_{\varphi\in [0,2\pi]} \int_0^{2\pi} q(t)\sin (nt+\varphi)\,dt.
	\end{equation}
\end{remark}

We are now in a position to give the proof of Theorem \ref{teo-glob-1}.

\begin{proof}
Let $x$ be a solution of \eqref{sys-global}. We divide the proof in two steps.

\medskip
\noindent
\textit{Step 1.} Let $k\in \mathbb{Z}$; we show that
\begin{equation} \label{eq-interiglob-1}
\lim_{|k|\to +\infty} \left(x_j(2k\pi)^2+x'_j(2k\pi)^2\right)=+\infty.
\end{equation}
To this end, according to Remark \ref{oss-coeffFourier}, let $\varphi_j\in [0,2\pi]$ be such that
	\begin{equation} \label{eq-coeffFourier2}
\left \vert \int_0^{2\pi} p_j(t) e^{\textnormal{i} n_j t}\,dt\right\vert=\int_0^{2\pi} p_j(t)\sin (n_j t+\varphi_j)\,dt
\end{equation}
and let
\begin{equation} \label{eq-defgamma1}
\Gamma=\int_0^{2\pi} p_j(t)\sin (n_jt+\varphi_j)\,dt-2(\sup h_j-\inf h_j).
\end{equation}
In view of assumption \eqref{eq-ipo-glob-1}, it results $\Gamma >0$. 

Let us now define the Lyapunov-like function
\begin{equation} \label{eq-defv1}
V(\zeta,\eta)=\eta \sin \varphi_j-n_j\zeta \cos \varphi_j,\quad \forall \ (\zeta,\eta)\in \RR.
\end{equation}
A simple computation shows that for every integer $k>0$ we have
\begin{align*}
&V(x_j(2k\pi),x'_j(2k\pi))-V(x_j(0),x'_j(0)) =
\left[ x_{j}'(t)\sin(n_{j}t+\phi_{j})\right]_{0}^{2k\pi} - n_{j}\left[x_{j}(t)\cos(n_{j}t+\phi_{j})\vphantom{x_{j}'}\right]_{0}^{2k\pi}
\\
& \quad = \int_0^{2k\pi} \left[x''_j(t)+n_{j}^{2}x_{j}(t)\right]\sin (n_j t+\varphi_j)\,dt\\
& \quad =\int_0^{2k\pi} p_j(t) \sin (n_j t+\varphi_j)\,dt-\int_0^{2k\pi} h_j(x(t)) \sin (n_j t+\varphi_j)\,dt\\
& \quad \geq k\int_0^{2\pi} p_j(t) \sin (n_j t+\varphi_j)\,dt - \sup h_j\, \int_0^{2k\pi} \sin^+ (n_j t+\varphi_j)\,dt+
\inf h_j \, \int_0^{2k\pi} \sin^- (n_j t+\varphi_j)\,dt \\
&\quad =k\int_0^{2\pi} p_j(t) \sin (n_j t+\varphi_j)\,dt -2k (\sup h_j-\inf h_j),
\end{align*}
where $\sin^{+}s=\max\{\sin s, 0\} $ and $ \sin^{-}s=\max\{-\sin s, 0\}$.
Recalling \eqref{eq-defgamma1}, we then deduce that
\[
V(x_j(2k\pi),x'_j(2k\pi))-V(x_j(0),x'_j(0))\geq k\Gamma,\quad \forall \ k\in \mathbb{N}.
\]
This implies that
\[
\lim_{k\to +\infty} V(x_j(2k\pi),x'_j(2k\pi))=+\infty,
\]
thus proving \eqref{eq-interiglob-1} when $k\to +\infty$.

When $k$ is a negative integer, by arguing as above it is possible to prove that
\[
V(x_j(0),x'_j(0))-V(x_j(2k\pi),x'_j(2k\pi))\geq -k\Gamma,
\]
implying that
\[
\lim_{k\to -\infty} V(x_j(2k\pi),x'_j(2k\pi))=-\infty.
\]
and finally \eqref{eq-interiglob-1} when $k\to -\infty$.

\medskip
\noindent
\textit{Step 2.} We prove that
\begin{equation} \label{eq-tuttoinf1}
\lim_{|t|\to +\infty} \left(x_j(t)^2+x'_j(t)^2\right)=+\infty.
\end{equation}
To this end, let 
\[
M_j=\sup |h_j|+\sup |p_j|
\]
and define the energy function
\[
E_j(t)=\dfrac{1}{2} x'_j(t)^2+\dfrac{1}{2} n_j^2 x_j(t)^2+\dfrac{1}{2} M_j^2,\quad \forall  \ t\in \R.
\]
A simple computation shows that 
$$
E'_j(t) = [p_j(t)-h_j(x_1(t),\ldots,x_d(t)]x'_j(t),\quad \forall \ t\in \R,
$$
and thus, using the elementary inequality $|ab| \leq \frac12 (a^2 + b^2)$ for $a,b \in \mathbb{R}$,
$$
|E'_j(t)|\leq E_j(t),\quad \forall \ t\in \R.
$$
Gronwall's Lemma then yields 
\begin{equation} \label{eq-gronwall1}
E_j(t)\geq e^{2k\pi-t}\, E_j(2k\pi)\geq e^{-2\pi} \, E_j(2k\pi),\quad \forall \ t\in [2k\pi, 2(k+1)\pi),
\end{equation}
where $k$ is the integer part of $t/2\pi$. Taking into account \eqref{eq-interiglob-1} and the definition of $E_j$, from \eqref{eq-gronwall1} we deduce that
\[
\lim_{t\to +\infty} E_j(t)=+\infty,
\]
which implies \eqref{eq-tuttoinf1} when $t\to +\infty$.
The proof for $t\to -\infty$ is analogous.
\end{proof}

In the remaining part of this section, we show how to deal, using the same scheme of proof, with the more general system
\begin{equation}\label{sys-matrice}
x'' + Ax + h(x) = p(t), \qquad x \in \mathbb{R}^d,
\end{equation}
where $A$ is a $d \times d$ matrix, $h: \mathbb{R}^d \to \mathbb{R}^d$ is locally Lipschitz continuous and bounded, 
$p: \mathbb{R} \to \mathbb{R}^d$ is continuous and $2\pi$-periodic (of course, system \eqref{sys-global} enters this setting, 
with $A$ diagonal). Before stating the result, we recall that the linear homogeneous system
\begin{equation} \label{eq-omogeneo}
x''+Ax=0,
\end{equation}
has a nontrivial $2\pi$- periodic solution if and only if $A$ has an eigenvalue of the form $n^2$, for some $n\in \mathbb{N}$.
With this in mind, the following result holds true (in the statement, $\langle \cdot, \cdot \rangle$ stands for the Euclidean scalar product in
$\mathbb{R}^d$ and $| \cdot |$ for the associated norm). 

\begin{Theorem} \label{teo-glob-2}
In the previous setting, suppose that the matrix $A$ is symmetric and positive definite and assume that there there exists $n\in \mathbb{N}$ such that $n^2$ is an eigenvalue of $A$. 
Finally, suppose that there exists a nontrivial $2\pi$-periodic solution $v$ of \eqref{eq-omogeneo} satisfying
	\begin{equation} \label{eq-ipo-glob-2}
	\sup |h| \int_0^{2\pi} |v(t)|\,dt < \left\vert \int_0^{2\pi} \langle p(t),v(t)\rangle\,dt\right\vert.
	\end{equation}
	Then, for every solution $x$ of \eqref{sys-matrice} it holds that
	\begin{equation} \label{eq-tesi-glob-2}
	\lim_{|t|\to +\infty} (|x(t)|^2+|x'(t)|^2)=+\infty.
	\end{equation}
\end{Theorem}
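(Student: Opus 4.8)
The plan is to follow the same two-step scheme used in the proof of Theorem~\ref{teo-glob-1}, replacing the scalar Lyapunov function by one built from the periodic solution $v$ of the homogeneous system. The key algebraic fact is that, since $A$ is symmetric, the operator $\frac{d^2}{dt^2}+A$ is (formally) self-adjoint, so testing equation~\eqref{sys-matrice} against $v$ and integrating by parts twice transfers all derivatives onto $v$. First I would define
\[
V(t) = \langle x'(t),v(t)\rangle - \langle x(t),v'(t)\rangle,
\]
which is the natural vectorial analogue of \eqref{eq-defv1}; indeed $V'(t)=\langle x''(t),v(t)\rangle-\langle x(t),v''(t)\rangle$, and using $x''=-Ax-h(x)+p$ together with $v''=-Av$ and the symmetry of $A$, the terms $-\langle Ax,v\rangle$ and $-\langle x,-Av\rangle=\langle x,Av\rangle$ cancel, leaving
\[
V'(t)=\langle p(t)-h(x(t)),v(t)\rangle.
\]

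Next I would integrate $V'$ over $[0,2k\pi]$ to obtain a telescoping estimate for $V(x(2k\pi))-V(x(0))$, exactly as in Step~1. By $2\pi$-periodicity of $v$ and $p$, the term $\int_0^{2k\pi}\langle p,v\rangle$ equals $k\int_0^{2\pi}\langle p,v\rangle\,dt$, while the coupling term is controlled crudely by
\[
\left|\int_0^{2k\pi}\langle h(x(t)),v(t)\rangle\,dt\right|\le \sup|h|\int_0^{2k\pi}|v(t)|\,dt = k\,\sup|h|\int_0^{2\pi}|v(t)|\,dt,
\]
using Cauchy--Schwarz and periodicity. Choosing the sign of $v$ so that $\int_0^{2\pi}\langle p,v\rangle\,dt=\left|\int_0^{2\pi}\langle p,v\rangle\,dt\right|$, assumption~\eqref{eq-ipo-glob-2} gives a strictly positive constant $\Gamma:=\left|\int_0^{2\pi}\langle p,v\rangle\,dt\right|-\sup|h|\int_0^{2\pi}|v(t)|\,dt$ with $V(x(2k\pi))-V(x(0))\ge k\Gamma$, so $|V(x(2k\pi))|\to+\infty$ as $|k|\to\infty$. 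Since $|V(t)|\le |v(t)|\,|x'(t)|+|v'(t)|\,|x(t)|\le C\,(|x(2k\pi)|^2+|x'(2k\pi)|^2)^{1/2}$ for a suitable constant $C$ depending on $\max(|v|,|v'|)$, this forces $|x(2k\pi)|^2+|x'(2k\pi)|^2\to+\infty$ along the integers, the analogue of \eqref{eq-interiglob-1}.

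For Step~2 I would upgrade this to the continuous limit \eqref{eq-tesi-glob-2} via the energy function. Here one must use positive definiteness of $A$: setting $E(t)=\frac12|x'(t)|^2+\frac12\langle Ax(t),x(t)\rangle+\frac12 M^2$ with $M=\sup|h|+\sup_t|p(t)|$, positive definiteness guarantees that $E(t)$ is comparable to $|x(t)|^2+|x'(t)|^2$ up to constants, so that $E(t)\to+\infty$ is equivalent to \eqref{eq-tesi-glob-2}. Differentiating gives $E'(t)=\langle x'(t),x''(t)+Ax(t)\rangle=\langle x'(t),p(t)-h(x(t))\rangle$, whence $|E'(t)|\le|x'(t)|\,M\le E(t)$ by the elementary inequality $|ab|\le\frac12(a^2+b^2)$, and Gronwall's Lemma propagates the blow-up from the grid points $2k\pi$ to all $t$ exactly as in \eqref{eq-gronwall1}. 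The main obstacle, and the only genuinely new point compared with the scalar case, is verifying the cancellation $-\langle Ax,v\rangle+\langle x,Av\rangle=0$ in the computation of $V'$: this is where the symmetry hypothesis on $A$ is indispensable, whereas positive definiteness is needed only in Step~2 to make the energy coercive. The estimate on the coupling term is now genuinely vectorial but follows immediately from Cauchy--Schwarz, which is precisely why a global bound on $|h|$ suffices and no resonance-type oscillation argument is required.
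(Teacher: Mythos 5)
Your proposal is correct and takes essentially the same route as the paper: the paper's Lyapunov function $V(\zeta,\eta)=\langle\eta,v(0)\rangle-\langle\zeta,v'(0)\rangle$ coincides with your time-dependent $V(t)=\langle x'(t),v(t)\rangle-\langle x(t),v'(t)\rangle$ at the grid points $t=2k\pi$ by periodicity of $v$, and its increment over $[0,2k\pi]$ is computed by exactly the integration-by-parts identity you obtain by differentiating $V(t)$ and using $v''=-Av$ together with the symmetry of $A$. Your explicit normalization of the sign of $v$ so that $\int_0^{2\pi}\langle p,v\rangle\,dt\ge 0$ is a small point the paper leaves implicit; everything else, including the Cauchy--Schwarz bound on the coupling term and the energy/Gronwall step that uses the positive definiteness of $A$, matches the paper's argument.
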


\begin{proof} Let $x$ be a solution of \eqref{sys-matrice}. We first observe that it is sufficient to prove that
	\begin{equation} \label{eq-interi-glob-2}
	\lim_{|k|\to +\infty} (|x(2k\pi)|^2+|x'(2k\pi)|^2)=+\infty,
	\end{equation}
	where $k$ is an integer. Indeed, defining the energy function
	\[
	E(t)=\dfrac{1}{2}|x'(t)|^2+\dfrac{1}{2}\langle Ax(t),x(t) \rangle+\dfrac{1}{2}M^2,\quad \forall \ t\in \R,
	\]
	where $M=\sup (|p|+|h|)$, by arguing as in the second step of the proof of Theorem \eqref{teo-glob-1} it can be shown that \eqref{eq-interi-glob-2} implies \eqref{eq-tesi-glob-2} (here, we use the fact that $A$ is positive definite).
	
	We now show the validity of \eqref{eq-interi-glob-2} when $k\to +\infty$; the case of $k\to -\infty$ is similar. Assume then $k > 0$ and let
	\begin{equation} \label{eq-defgamma2}
	\Gamma'=\left\vert \int_0^{2\pi} \langle p(t),v(t)\rangle\,dt\right\vert-\sup |h| \int_0^{2\pi} |v(t)|\,dt;
	\end{equation}
	observe that $\Gamma'>0$ by \eqref{eq-ipo-glob-2}. Let us define
	\begin{eqnarray} \label{eq-defv2}
	V(\zeta,\eta)=\langle\eta,v(0)\rangle-\langle\zeta,v'(0)\rangle,\quad \forall \ (\zeta,\eta)\in \R^d\times \R^d.
	\end{eqnarray}
	As in the previous proof, a standard computation shows that
	\begin{align*}
	 V(x(2k\pi),x'(2k\pi))-V(x(0),x'(0))&=\int_0^{2k\pi} \langle x''(t)+Ax(t),v(t)\rangle,dt\\
	&=\int_0^{2k\pi} \langle p(t),v(t)\rangle\,dt-\int_0^{2k\pi} \langle h(x(t)),v(t)\rangle\,dt \\
	&\geq k\int_0^{2\pi} \langle p(t),v(t)\rangle\,dt-\sup |h|\,  \int_0^{2k\pi} |v(t)|\,dt\\
	&= k\int_0^{2\pi} \langle p(t),v(t)\rangle\,dt -k \sup |h| \, \int_0^{2\pi} |v(t)|\,dt.
	\end{align*}
	Recalling \eqref{eq-defgamma2}, we then deduce that
	\[
	V(x(2k\pi),x'(2k\pi))-V(x(0),x'(0))\geq k\Gamma',\quad \forall k \in \mathbb{N}.
	\]
	This implies that
	\[
	\lim_{k\to +\infty} V(x(2k\pi),x'(2k\pi))=+\infty,
	\]
	thus proving \eqref{eq-interi-glob-2} when $k\to +\infty$.
\end{proof}

It is worth noticing that, with the same proof, Theorem~\ref{teo-glob-2} could be extended to the case of an infinite-dimensional system like
$$
x'' + Ax + h(x) = p(t), \qquad x \in \mathcal{H},
$$
where $\mathcal{H}$ is a real Hilbert space and $A: \mathcal{H} \to \mathcal{H}$ is a positive, bounded and self-adjoint linear operator.
For some recent advances on the topic of ODEs in Hilbert spaces, see \cite{BosFonGar->,FonKluSfePP,FonMawWil->} and the references therein.

We end this section with a comparison between Theorem \ref{teo-glob-1} and Theorem \ref{teo-glob-2} in the case when $A$ is a diagonal matrix.
\begin{remark}
	Theorem \ref{teo-glob-2} is of vectorial nature. In the case of a diagonal matrix $A$, when compared with Theorem \eqref{teo-glob-1} it requires a stronger condition in the coupling term $h$. Indeed, if $n_j^2$ is the eigenvalue of $A$ corresponding to an integer $n_j$, we can take
	\[
	v(t)=\sin (n_jt+\varphi_j) e_j,\quad \forall \ t\in \R,
	\] 
	where $\varphi_j$ is as in \eqref{eq-coeffFourier2} and $e_j$ is the $j$-th vector of the standard basis of $\R^d$: hence, assumption \eqref{eq-ipo-glob-2} reads as
	\[
	\int_0^{2\pi} p_j(t)\sin (n_j t+\varphi_j)\,dt=\left \vert \int_0^{2\pi} p_j(t) e^{\textnormal{i} n_j t}\,dt\right\vert > \sup |h| \int_0^{2\pi} |\sin (n_jt+\varphi_j)|\,dt=4\sup |h|,
	\]
	which is stronger than \eqref{eq-ipo-glob-1} since
	\[
	4\sup |h|\geq 4\sup |h_j|\geq 2 (\sup h_j-\inf h_j).
	\]
	On the other hand, a sharper result in the general case of system \eqref{sys-matrice} can be proved by diagonalizing the matrix $A$. Indeed, let $Q$ be an orthogonal matrix such that $Q A Q^T=D$ is diagonal and let $y=Qx$. The original system \eqref{sys-matrice} is then transformed in
	\begin{equation} \label{eq-nuovosis}
	y''+Dy+h^*(y)=p^*(t),
	\end{equation}
	where $h^*(y)=Q h(Q^Ty) $ and $p^*(t)=Q p(t)$.
As a consequence, an unboundedness result for $x$ can be obtained by applying Theorem \eqref{teo-glob-1} to \eqref{eq-nuovosis} when 
	\[
	2(\sup h^*_j-\inf h^*_j)<\left\vert \int_0^{2\pi} p^*_j(t)e^{\textnormal{i} n_j t}\,dt\right \vert.
	\]
	This is a sharper assumption compared to \eqref{eq-ipo-glob-2}, but it involves the matrix $Q$, whose knowledge is not required in \eqref{eq-ipo-glob-2}.
\end{remark}

\section{Two asymptotic results}\label{sec-3}

In this section, we deal again with system \eqref{sys-global}, by studying a couple of situations in which the assumptions on the nonlinear coupling term 
involve its asymptotic behavior rather than the span of its image.
Whenever $n_{j} \in \mathbb{N} $, we will use again (cf. \eqref{eq-defv1}) the Lyapunov function
$ V_{j,\varphi}(\zeta,\eta) = \zeta\sin\varphi - n_{j}\eta\cos\varphi $, with suitable choiches of $\varphi\in[0,2\pi]$, to estimate the growth of $(x_{j}(2k\pi),x'_{j}(2k\pi))$ with respect to $ k\in\Z $.
As in the proof of Theorem~\ref{teo-glob-1}, if $ n_{j}\in\mathbb{N} $, then we have that
\begin{equation}\label{eq-deltaV}
V_{j,\varphi}(x_{j}(2\pi),x'_{j}(2\pi)) - V_{j,\varphi}(x_{j}(0),x'_{j}(0)) =
\int_{0}^{2\pi}[ p_{j}(t) - h_{j}(x(t)) ]\sin(n_{j}t+\varphi)\,dt
\end{equation}
and that there exists $ \varphi_{j}^{0} \in [0,2\pi] $ such that
\[
\int_{0}^{2\pi} p_{j}(t) \sin\left( n_{j}t+\varphi_{j}^{0} \right)\,dt = \left| \int_{0}^{2\pi} p_{j}(t) e^{in_{j}t}\,dt \right|.
\]
The estimate the term $\int_{0}^{2\pi}h_{j}(x(t)) \sin(n_{j}t+\varphi)\,dt$ for large solutions
will depend on the particular form of the nonlinear term $h_{j}$: we consider cyclic and radial dependencies.
In any case, we need an estimate of the solution  on $[0,2\pi]$ with respect to initial conditions.
A straightforward application of the variation of constants formula shows that
\[
\begin{aligned}
x_{j}(t) &= x_{j}(0) \cos n_{j}t + \frac{x'_{j}(0)}{n_{j}}\sin n_{j}t + 
\frac{1}{n_{j}}\int_{0}^{t} \left[ p_{j}(s)-h_{j}(x(s))\right]\sin n_{j}(t-s)\,ds 
\end{aligned} \qquad j=1,\dots,d
\]
if $x$ is any solution of our system.
In particular, the boundedness of $ h_j $ and $ p_j $ implies the following estimate:
\begin{equation}\label{eq-stimaC2}
\begin{aligned}
x_{j}(t) &= A_{j} \sin(n_{j}t+\omega_{j}) + \sigma_{j}(t) \\
A_{j}    &=\sqrt{x_{j}^{2}(0)+\frac{x'_{j}(0)^2}{n_{j}^{2}}}, \quad
\omega_{j} \in [0,2\pi], \quad \|\sigma_{j}\|_{C^{2}([0,2\pi])}\le C,
\end{aligned}
\end{equation}
where the constant $C$ is independent of $A_{j}$ and $j$.
\subsection{Systems with cyclic coupling}
Here we assume that system \ref{sys-global} is cyclically coupled, that is
$$
h_{j}(x) = h_{j}(x_{j+1})
$$
for each $ j \in \{1,\ldots,d\}$ (as usual, we use the cyclic agreement $x_{d+1} = x_1$); cf. \eqref{cyc-intro}.
In particular, we have that $h_j: \mathbb{R} \to \mathbb{R}$ is locally Lipschitz continuous and bounded
for each $j\in\{1,\ldots,d\}$.
Henceforth, we will thus write system \eqref{sys-global} as
\begin{equation}\label{sys-cyc}
x''_j + n_j^2 \, x_j + h_j(x_{j+1}) = p_j(t), \qquad j\in\{1,\ldots,d\}.
\end{equation}
For this subsection, we also introduce the notations
\[
\overline{h}_j(\pm \infty) = \limsup_{s \to \pm \infty} h_j(s), \qquad \underline{h}_j(\pm \infty) = \liminf_{s \to \pm \infty} h_j(s),
\]
and
\[
\Delta h_j = \max\{ \overline{h}_j(+\infty),\overline{h}_j(-\infty) \} - \min \{ \underline{h}_j(+\infty),\underline{h}_j(-\infty)\}.
\]
Notice that, when $h_j$ has limits $h_j(\pm \infty)$ at infinity, then
\[
\Delta h_j = \left\vert h_j(+\infty) - h_j(-\infty) \right\vert.
\]
Our result reads as follows.

\begin{Theorem}\label{teo-asy-1}
In the previous setting, assume that
\[
n_j \in \mathbb{N} \qquad \mbox{ and } \qquad 2 \Delta h_j < \left\vert \int_0^{2\pi} p_j(t) e^{\textnormal{i} n_j t}\,dt\right\vert,\qquad \text{for every } j \in \{1,\ldots,d\}.
\]
Then, for every $j \in\{1,\ldots,d\}$ there exist open sets $\mathcal{C}^-_j, \mathcal{C}^+_j \subset \mathbb{R}^2$ such that $ \RR\setminus ( \mathcal{C}_{j}^{+}\cup\mathcal{C}_{j}^{-} ) $ is compact and, setting 
\[
\mathcal{C} := \prod_{j=1}^d \left( \mathcal{C}^-_j \cup \mathcal{C}^+_j \right),
\]
the following holds true:
\begin{itemize}
\item[(i)] for every solution $x$ of \eqref{sys-cyc},
\[
(x(0),x'(0)) \in \mathcal{C} \; \Longrightarrow \;\sup_{t \in \mathbb{R}} \left( x_j(t)^2 + x'_{j}(t)^2 \right) = +\infty, \quad \mbox{ for every } j \in\{1,\ldots,d\}.
\]
\end{itemize}
Moreover, setting
\[
\mathcal{C}^- =  \prod_{j=1}^d \mathcal{C}^-_j \quad \mbox{ and } \quad \mathcal{C}^+ =  \prod_{j=1}^d \mathcal{C}^+_j,
\]
the following hold:
\begin{itemize}
\item[(ii)] $\mathcal{C}^- \cap \mathcal{C}^+$ has infinite measure;
\item[(iii)] for every solution $x$ of \eqref{sys-cyc},
\[
(x(0),x'(0)) \in \mathcal{C}^- \; \Longrightarrow \;\lim_{t \to -\infty} \left( x_j(t)^2 + x'_{j}(t)^2 \right) = +\infty, \quad \mbox{ for every } j\in\{1,\ldots,d\},
\] 
\[
(x(0),x'(0)) \in \mathcal{C}^+ \; \Longrightarrow \;\lim_{t \to +\infty} \left( x_j(t)^2 + x'_{j}(t)^2 \right) = +\infty, \quad \mbox{ for every } j\in\{1,\ldots,d\}.
\] 
\end{itemize}
\end{Theorem}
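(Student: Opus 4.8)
The plan is to reduce everything, exactly as in Steps~1--2 of the proof of Theorem~\ref{teo-glob-1}, to the behaviour of the amplitudes $A_j(2k\pi):=\sqrt{x_j(2k\pi)^2+x_j'(2k\pi)^2/n_j^2}$ along the integer times $t=2k\pi$: the energy/Gronwall argument of that Step~2 shows that $\sup_k A_j(2k\pi)=+\infty$ (resp. $\lim_{k\to\pm\infty}A_j(2k\pi)=+\infty$) already forces the corresponding conclusion for $\sup_t$ (resp. $\lim_{|t|\to\infty}$) of $x_j(t)^2+x_j'(t)^2$. The first building block is a one--step estimate. For each $j$ I would fix a closed sub-arc $J_j\subset[0,2\pi]$ on which $\int_0^{2\pi}p_j(t)\sin(n_jt+\varphi)\,dt\ge 2\Delta h_j+2\delta_j$ for some $\delta_j>0$; this is possible because, by Remark~\ref{oss-coeffFourier} and the hypothesis, the maximum over $\varphi$ of the left-hand side equals $\vert\int_0^{2\pi}p_je^{\textnormal{i}n_jt}\,dt\vert>2\Delta h_j$. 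Writing $x_{j+1}$ on $[2k\pi,2(k+1)\pi]$ via \eqref{eq-stimaC2} as a large sinusoid plus a $C^2$-bounded remainder, and using the definition of $\Delta h_j$ (so that $|h_j(x_{j+1}(t))-c_j|\le\tfrac12\Delta h_j+\epsilon$ off a small transition set, $c_j$ the midpoint), one gets $\vert\int_{2k\pi}^{2(k+1)\pi}h_j(x_{j+1}(t))\sin(n_jt+\varphi)\,dt\vert\le 2\Delta h_j+\epsilon$ as soon as $A_{j+1}(2k\pi)\ge R_0$, with $R_0$ uniform in $j,k$. Combined with \eqref{eq-deltaV} this gives, for every $\varphi\in J_j$,
\[
V_{j,\varphi}(x_j(2(k+1)\pi),x_j'(2(k+1)\pi))-V_{j,\varphi}(x_j(2k\pi),x_j'(2k\pi))\ \ge\ \delta_j\quad\text{whenever }A_{j+1}(2k\pi)\ge R_0,
\]
together with an unconditional lower bound $\ge-D_j$. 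I shall also record two exact consequences of \eqref{eq-stimaC2}: the identity $V_{j,\varphi}(x_j(2k\pi),x_j'(2k\pi))=n_jA_j(2k\pi)\sin(\varphi-\omega_j^{(k)})$ and the slow-variation bound $|A_j(2(k+1)\pi)-A_j(2k\pi)|\le C'$.

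I would then set, for a threshold $R>R_0$ to be fixed,
\[
\mathcal{C}_j^{\pm}:=\bigcup_{\varphi\in J_j}\{(\zeta,\eta)\in\RR:\ \pm V_{j,\varphi}(\zeta,\eta)>n_jR\},
\]
each open as a union of open half-planes. By the identity above, membership in $\mathcal{C}_j^+\cup\mathcal{C}_j^-$ forces $A_j>R$, while $A_j$ large forces membership (since $\max_{\varphi\in J_j}|\sin(\varphi-\omega)|\ge c_0>0$ uniformly in $\omega$, $J_j$ having positive length); hence $\RR\setminus(\mathcal{C}_j^+\cup\mathcal{C}_j^-)$ sits inside a disk and is compact. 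For (ii) I observe that when the phase $\omega$ lies strictly inside $J_j$ the arc straddles $\omega$, so for $A$ large there are $\varphi,\varphi'\in J_j$ with $\sin(\varphi-\omega)>0>\sin(\varphi'-\omega)$, both of size $>R/A$; thus a whole sector $\{\text{phase interior to }J_j,\ \text{amplitude large}\}$ lies in $\mathcal{C}_j^+\cap\mathcal{C}_j^-$, which therefore has infinite measure, and so does $\mathcal{C}^+\cap\mathcal{C}^-=\prod_j(\mathcal{C}_j^+\cap\mathcal{C}_j^-)$.

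For (iii) the coupling cooperates. If $(x(0),x'(0))\in\mathcal{C}^+=\prod_j\mathcal{C}_j^+$, then for each $j$ there is $\varphi_j\in J_j$ with $V_{j,\varphi_j}(x_j(0),x_j'(0))>n_jR$, so $A_j(0)>R>R_0$ for every $j$. I would prove by induction that $A_j(2k\pi)>R$ for all $j$ and all $k\ge0$: granting it at step $k$, every feeder satisfies $A_{j+1}(2k\pi)\ge R_0$, so the one--step estimate applies to all equations at once and $V_{j,\varphi_j}(x_j(2(k+1)\pi),\cdot)>n_jR+\delta_j$, whence $A_j(2(k+1)\pi)\ge V_{j,\varphi_j}/n_j>R$. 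The same computation gives $V_{j,\varphi_j}(x_j(2k\pi),\cdot)\ge V_{j,\varphi_j}(x_j(0),\cdot)+k\delta_j\to+\infty$, hence $A_j(2k\pi)\to+\infty$ for every $j$; the $\mathcal{C}^-$ implication is identical with time reversed.

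The delicate point is (i), where the cyclic coupling is the main obstacle. The self-sustaining mechanism of (iii) needs all feeders to remain large in the \emph{same} time direction, whereas a point of $\mathcal{C}=\prod_j(\mathcal{C}_j^-\cup\mathcal{C}_j^+)$ may mix ``forward-large'' components (in $\mathcal{C}_j^+$) with components that are only ``backward-large'' (in $\mathcal{C}_j^-$): a backward-type feeder can shrink below $R_0$ as $t$ increases, destroying the monotonicity of its predecessor's Lyapunov function. My plan is to first isolate the propagation principle supplied by the one--step estimate: fixing any $\varphi\in J_j$, if $A_{j+1}(2k\pi)\to+\infty$ as $k\to+\infty$ then $V_{j,\varphi}(x_j(2k\pi),\cdot)$ eventually increases by at least $\delta_j$ per step, so $A_j(2k\pi)\ge V_{j,\varphi}/n_j\to+\infty$; iterating around the cycle, \emph{if one component tends to $+\infty$ in a given time direction then so do all of them} (and likewise as $k\to-\infty$). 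This reduces (i) to showing that, for data in $\mathcal{C}$, at least one component is unbounded in at least one time direction. I expect this reduction to be the crux: it must rule out the residual scenario in which every component repeatedly dips below $R_0$ both forwards and backwards while staying bounded. Here I would exploit the slow-variation bound $|A_j(2(k+1)\pi)-A_j(2k\pi)|\le C'$ together with a crossover analysis of the pair $V_{j,\varphi}$, $-V_{j,\varphi}$ maximised over $\varphi\in J_j$ — forced to move in opposite monotone directions on every block of steps where all amplitudes exceed $R_0$ — to show that a component emerging from a dip must overshoot and thereby re-feed the cycle. Quantifying this overshoot uniformly, against the possibility of arbitrarily large initial amplitudes for the backward-type components, is the part I anticipate will require the most care.
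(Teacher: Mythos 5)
Your construction matches the paper's almost step for step where it is complete: the one--period estimate obtained by combining \eqref{eq-deltaV} with a lower bound for $\int_0^{2\pi}p_j(t)\sin(n_jt+\varphi)\,dt$ on a sub-arc of phases and an asymptotic upper bound $2\Delta h_j+\Gamma$ for $\int_0^{2\pi}h_j(x_{j+1}(t))\sin(n_jt+\varphi)\,dt$ valid when the feeder is large (the paper gets this via Fatou's lemma and \eqref{eq-stimaC2}, you via a direct ``midpoint plus transition set'' argument --- equivalent); the sets $\mathcal{C}_j^{\pm}$ as unions of half-planes $\{\pm V_{j,\varphi}>\mathrm{const}\}$ (the paper uses exactly two angles $\varphi_j^1,\varphi_j^2$, so that $\RR\setminus(\mathcal{C}_j^+\cup\mathcal{C}_j^-)$ is a compact parallelogram and $\mathcal{C}_j^+\cap\mathcal{C}_j^-$ a pair of opposite angles of infinite measure, while you take a whole arc of angles --- a cosmetic difference); the self-sustaining induction for (iii), in which membership of \emph{every} component in its half-plane $V_{j,\varphi_j}>\overline{V}_j(A)$ guarantees that every feeder is large and hence that every $V_{j,\varphi_j}$ increases and the membership propagates; and the passage from discrete to continuous time via the energy/Gronwall step of Theorem~\ref{teo-glob-1}. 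Parts (ii) and (iii) are therefore correctly proved, by essentially the paper's argument.

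The gap is exactly where you place it: part (i) for mixed data, i.e.\ for initial conditions in $\mathcal{C}\setminus(\mathcal{C}^+\cup\mathcal{C}^-)$. Your ``propagation principle'' (if $A_{j+1}(2k\pi)\to+\infty$ then $A_j(2k\pi)\to+\infty$, and hence around the whole cycle) is correct, but the problem it reduces (i) to --- showing that for mixed data at least one component is \emph{eventually} above the threshold in at least one time direction --- is left open, and the ``crossover/overshoot'' analysis you sketch is a plan, not a proof. Concretely, for $d=2$ with $(x_1(0),x_1'(0))\in\mathcal{C}_1^+$ and $(x_2(0),x_2'(0))\in\mathcal{C}_2^-\setminus\mathcal{C}_2^+$, the one-step estimate pushes \emph{both} $V_{1,\varphi_1}$ and $V_{2,\varphi_2}$ upward as $k$ increases and downward as $k$ decreases, as long as all components stay large; so forward in time $V_{2,\varphi_2}$ is driven up from a negative value and $x_2$ may enter the disk of radius $A$ after finitely many steps, while backward in time $V_{1,\varphi_1}$ is driven down from a positive value and $x_1$ may enter the disk after finitely many steps. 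Once a feeder is small, no monotonicity of its predecessor's Lyapunov function is available, and nothing in the estimates you (or the paper) establish excludes that both components then remain bounded. Your proposal therefore does not prove (i).

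For the comparison you asked for: the paper's own proof establishes precisely the two unmixed implications (``$V_{j,\varphi_j}(0)>\overline{V}_j(A)$ for all $j$ $\Rightarrow$ blow-up as $k\to+\infty$'' and its mirror image), which yield (iii) and hence (i) restricted to $\mathcal{C}^+\cup\mathcal{C}^-$; no separate argument for the mixed case of (i) is written there either. So you have not overlooked a device that is present in the paper --- rather, you have correctly isolated the one step of Theorem~\ref{teo-asy-1} that the recursive Lyapunov scheme does not settle as stated, and your write-up should either supply that missing argument or weaken (i) to initial data in $\mathcal{C}^+\cup\mathcal{C}^-$.
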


Let us explain more informally the meaning of Theorem \ref{teo-asy-1}.
To this end, notice that
\[
\mathcal{C}^- \cap \mathcal{C}^+ \subset \mathcal{C}^{\pm} \subset \mathcal{C}.
\]
The set $\mathcal{C}$ is the largest one: its projection $ \mathcal{C}_{j}^{-}\cup\mathcal{C}_{j}^{+}$ on the $j$-th factor $\RR$ has compact complement.
The sets $\mathcal{C}^{\pm}$ are smaller (and indeed it will be clear from the proof that their components $ \mathcal{C}_{j}^{\pm} $ are not the complement of a compact set) but they have the property of having infinite measure: even more, their intersection $\mathcal{C}^- \cap \mathcal{C}^+$ has infinite measure, as well.
The unbounded properties of the solutions starting on this sets can be described as follows:
\begin{itemize}
\item[(i)] for vector solutions $x$ starting on $\mathcal{C}$, each component $x_j$ is unbounded in the phase-plane: it could be, however, that some component is unbounded in the past and some other in the future;
\item[(ii)] for vector solutions $x$ starting on $\mathcal{C}^-$ (resp., $\mathcal{C}^+$),
each component $x_j$ is unbounded in the past (resp., in the future);
\item[(iii)] for vector solutions $x$ starting on $\mathcal{C}^- \cap \mathcal{C}^+$,
each component $x_j$ is unbounded both in the past and in the future.
\end{itemize}

\begin{proof}
We fix $ \Gamma > 0 $ such that
\[
\int_{0}^{2\pi} p_{j}(t) \sin\left( n_{j}t+\varphi_{j}^{0}\right)\,dt = \left| \int_{0}^{2\pi} p_{j}(t) e^{\text{i}n_{j}t}\,dt \right|
> 2( \Delta h_{j} + \Gamma ), \qquad j \in \{1,\ldots,d\} .
\]
By continuity, there exist open intervals $ I_{j}$ containing $ \varphi_{j}^{0} $ such that
\begin{equation}\label{eq-Fourier}
\int_{0}^{2\pi} p_{j}(t) \sin( n_{j}t+\varphi_{j} )\,dt > 2( \Delta h_{j} + \Gamma ),
\qquad \forall \varphi_{j}\in I_{j} \text{ and } j\in\{1,\ldots,d\}.
\end{equation}
We claim that:
\begin{equation}\label{eq-Fatou}
\limsup_{A_{j+1}\to+\infty} \int_{0}^{2\pi}h_{j}(x_{j+1}(t))\sin(n_{j}t+\varphi)\,dt \le 2\Delta h_{j},
\quad\text{uniformly w.r.t. } \varphi\in[0,2\pi] \text{ and } A_{i}, i\ne j+1,
\end{equation}
where we recall that $ A_{j+1}\to+\infty $ is equivalent to $\left|(x_{j+1}(0),x'_{j+1}(0))\right|\to+\infty$,
as it can be seen in \eqref{eq-stimaC2}.
Indeed, by Fatou's lemma we have that
\[
\limsup_{A_{j+1}\to+\infty} \int_{0}^{2\pi}h_{j}(x_{j+1}(t))\sin(n_{j}t+\varphi)\,dt \le
\int_{0}^{2\pi}\limsup_{A_{j+1}\to+\infty} [ h_{j}(x_{j+1}(t))\sin(n_{j}t+\varphi) ]\,dt
\]
and estimate \eqref{eq-stimaC2} shows that for almost every $ t \in [0,\pi] $ we have that
either $ x_{j+i}(t)\to +\infty $ or $ x_{j+i}(t)\to -\infty $ as $ A_{j+1}\to +\infty$ and these limits
hold uniformly w.r.t. $ A_{i} $ for $ i\ne j+1$.
Therefore, we deduce that:
\[
\begin{aligned}
\limsup_{A_{j+1}\to+\infty} & [ h_{j}(x_{j+1}(t))\sin(n_{j}t+\varphi) ] \\ 
&= \sin^{+}(n_{j}t+\varphi)\limsup_{A_{j+1}\to+\infty} h_{j}(x_{j+1}(t))
- \sin^{-}(n_{j}t+\varphi)\liminf_{A_{j+1}\to+\infty} h_{j}(x_{j+1}(t)) \\
& \le \sin^{+}(n_{j}t+\varphi) \max\{\overline{h}_{j}(+\infty),\overline{h}_{j}(-\infty)\}
- \sin^{-}(n_{j}t+\varphi) \min\{\underline{h}_{j}(+\infty),\underline{h}_{j}(-\infty)\}
\end{aligned}
\]
for almost every $ t\in [0,2\pi] $.
Since $ \int_{0}^{2\pi}\sin^{\pm}(n_{j}t+\varphi)\,dt = 2 $, we obtain \eqref{eq-Fatou}.

Thanks to \eqref{eq-Fatou}, there is a constant $ A>0 $ such that
\[
x_{j+1}(0)^2+x'_{j+1}(0)^2\ge A^{2} \implies
\int_{0}^{2\pi}h_{j}(x_{j+1}(t))\sin(n_{j}t+\varphi)\,dt\le 2\Delta h_{j}+\Gamma, 
\qquad \forall \varphi\in[0,2\pi],
\]
for each $ j\in\{1,\ldots,d\} $.
Hence, we deduce that
\[
x_{j+1}(0)^2+x'_{j+1}(0)^2\ge A^{2} \implies V_{j,\varphi_{j}}(x_{j}(2\pi),x'_{j}(2\pi)) - V_{j,\varphi_{j}}(x_{j}(0),x'_{j}(0)) \ge \Gamma,
\qquad \forall \varphi_{j} \in I_{j},
\]
by using \eqref{eq-deltaV} and \eqref{eq-Fourier}.

Let us define 
$$
 \overline{V}_{j}(A) = \max\{ V_{j,\varphi}(\zeta,\eta):|(\zeta,\eta)|\le A \text{ and } \varphi\in[0,2\pi] \} .
$$
If the initial conditions satisfy $ V_{j,\varphi_{j}}(x_{j}(0),x'_{j}(0)) > \overline{V}_{j}(A) $ for
 some $\varphi_{j}\in I_{j} $ and all $ j\in\{1,\ldots,d\} $,
then we have that $ x_{j}(0)^2+x'_{j}(0)^2\ge A^{2} $ for all $ j\in\{1,\ldots,d\} $ and, hence,
\[
V_{j,\varphi_{j}}(x_{j}(2\pi),x'_{j}(2\pi))\ge V_{j,\varphi_{j}}(x_{j}(0),x'_{j}(0)) +\Gamma
> V_{j,\varphi_{j}}(x_{j}(0),x'_{j}(0)) \ge A, \qquad \forall j\in\{1,\ldots,d\}.
\]
Therefore, we obtain recursively that
\[
V_{j,\varphi_{j}}(x_{j}(2k\pi),x'_{j}(2k\pi)) - V_{j,\varphi_{j}}(x_{j}(0),x'_{j}(0)) \ge k\Gamma,
\qquad \forall k\in\mathbb{N} \text{ and } j \in \{1,\ldots,d\},
\]
which implies that $ x_{j}(2k\pi)^2 + x'_{j}(2k\pi)^2 \to +\infty $ as $ k \to +\infty $.

In a similar way, if the initial conditions satisfy $ V_{j,\varphi_{j}}(x_{j}(0),x'_{j}(0)) < -\overline{V}_{j}(A) $ for  some $\varphi_{j}\in I_{j} $ and all $ j\in\{1,\ldots,d\} $, then we can show that
$ x_{j}(2k\pi)^2 + x'_{j}(2k\pi)^2 \to +\infty $ as $ k \to -\infty $.

We observe now that, for a fixed $j\in\{1,\ldots,d\}$ and a fixed $\varphi_{j}\in I_{j} $, the inequalities
 $ V_{j,\varphi_{j}}(\zeta,\eta) < -\overline{V}_{j}(A) $ and $ V_{j,\varphi_{j}}(\zeta,\eta) > \overline{V}_{j}(A) $ define two opposite half-planes in $\RR$ that originate from the two parallel
straight lines $V_{j,\varphi_{j}}(\zeta,\eta) = \pm \overline{V}_{j}(A) $.
For each $j\in\{1,\ldots,d\}$ we fix two angles $ \varphi_{j}^{1}, \varphi_{j}^{2} \in I_{j} $ such that
$ 0 < | \varphi_{j}^{1} - \varphi_{j}^{2} | < \pi $ and define the open regions
\[
\begin{aligned}
\mathcal{C}_{j}^{+} & = \{ (\zeta,\eta)\in\RR : V_{j,\varphi_{j}^{1}}(\zeta,\eta) > \overline{V}_{j}(A) \}
\cup \{ (\zeta,\eta)\in\RR : V_{j,\varphi_{j}^{2}}(\zeta,\eta) > \overline{V}_{j}(A) \} \\
\mathcal{C}_{j}^{-} & = \{ (\zeta,\eta)\in\RR : V_{j,\varphi_{j}^{1}}(\zeta,\eta) < -\overline{V}_{j}(A) \}
\cup \{ (\zeta,\eta)\in\RR : V_{j,\varphi_{j}^{2}}(\zeta,\eta) < -\overline{V}_{j}(A) \}, \\
\end{aligned}
\]
which are actually two reflex angles in the plane and opposite to each other with respect to the origin.
In particular $ \mathcal{C}_{j}^{+}\cap\mathcal{C}_{j}^{-} $ is made up by two opposite and disjoint angles and, thus, has infinite measure, while $ \RR\setminus(\mathcal{C}_{j}^{+}\cup\mathcal{C}_{j}^{-}) $ is a compact parallelogram.

We conclude by using the argument at the beginning of the proof of Theorem~\ref{teo-glob-1}.
\end{proof}
\subsection{Systems with radial coupling}
We assume here that the nonlinear term depends only on the Euclidean norm of the vector solution, that is
$$
 h_{j}(x) = h_{j}(|x|), \qquad j\in\{1,\ldots,d\};
$$
cf. \eqref{rad-intro}. Therefore, system \eqref{sys-global} becomes 
\begin{equation}\label{sys-rad}
x''_{j} + n_{j}^2 \, x_{j} + h_{j}\left(|x|\right) = p_{j}(t), \qquad j\in\{1,\ldots,d\},
\end{equation}
where $h_{j}: [0,+\infty) \to \mathbb{R}$ is locally Lipschitz continuous and bounded, for $j\in\{1,\ldots,d\}$.
In this subsection we also set
\[
\overline{h}_{j}(+\infty) = \limsup_{s \to +\infty} h_{j}(s), \qquad
\underline{h}_{j}(+\infty) = \liminf_{s \to +\infty} h_{j}(s) \quad \text{and}\quad
\Delta h_{j} = \overline{h}_{j}(+\infty) - \underline{h}_{j}(+\infty), \qquad \forall j\in\{1,\ldots,d\}.
\]
Our result reads as follows.
\begin{Theorem}\label{teo-asy-2}
In the previous setting, assume that
\[
n_{j} \in \mathbb{N} \qquad \text{and} \qquad 2 \Delta h_{j} < \left\vert \int_0^{2\pi} p_{j}(t) e^{\textnormal{i} n_{j} t}\,dt\right\vert,\qquad \text{for some } j \in \{1,\ldots,d\}.
\]
Then, there exist open sets $\mathcal{C}^-_{j}, \mathcal{C}^+_{j} \subset \mathbb{R}^2$ such that 
$\mathbb{R}^2 \setminus (\mathcal{C}^-_{j} \cup \mathcal{C}^+_{j})$ is compact, 
$\mathcal{C}^- _j\cap \mathcal{C}^+_j$ has infinite measure and, moreover,
the following holds true for every solution $x$ of \eqref{sys-rad}:
\[
\begin{aligned}
(x_{j}(0),x'_{j}(0)) \in \mathcal{C}_{j}^-
& \Longrightarrow \lim_{t \to -\infty} \left( x_{j}(t)^2 + x'_{j}(t)^2 \right) = +\infty; \\
(x_{j}(0),x'_{j}(0)) \in \mathcal{C}_{j}^+
& \Longrightarrow \lim_{t \to +\infty} \left( x_{j}(t)^2 + x'_{j}(t)^2 \right) = +\infty.
\end{aligned}
\]
\end{Theorem}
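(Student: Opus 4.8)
The plan is to follow, almost verbatim, the scheme developed in the proof of Theorem~\ref{teo-asy-1}, the only genuinely new ingredient being the way the boundedness of the radial coupling is exploited. Since now both the hypothesis and the conclusion concern a single index, I would fix once and for all an index $j$ for which $n_j \in \mathbb{N}$ and $2\Delta h_j < \left\vert\int_0^{2\pi} p_j(t) e^{\textnormal{i} n_j t}\,dt\right\vert$ hold, and work only with the Lyapunov function $V_{j,\varphi}$ and the $j$-th equation. As in the cyclic case, I would first choose $\Gamma>0$ and an open interval $I_j \ni \varphi_j^0$ such that $\int_0^{2\pi} p_j(t)\sin(n_j t + \varphi)\,dt > 2(\Delta h_j + \Gamma)$ for every $\varphi \in I_j$.

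The heart of the argument is the estimate of $\int_0^{2\pi} h_j(|x(t)|)\sin(n_j t + \varphi)\,dt$ for large initial data of the $j$-th component, and here the radial structure is decisive. The key remark is that $|x(t)| \ge |x_j(t)|$ for every $t$; hence, writing $x_j(t) = A_j \sin(n_j t + \omega_j) + \sigma_j(t)$ with $|\sigma_j(t)| \le C$ as in \eqref{eq-stimaC2}, the sole largeness of $A_j = \sqrt{x_j(0)^2 + x_j'(0)^2/n_j^2}$ forces $|x(t)| \to +\infty$, for almost every $t \in [0,2\pi]$, uniformly with respect to the remaining initial data $(x_i(0),x_i'(0))$, $i\ne j$. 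This is precisely what makes a hypothesis on the single index $j$ sufficient and the conclusion about the single component $x_j$ legitimate: the other components need not be controlled at all. Moreover, since $|x(t)|$ is a nonnegative quantity tending to $+\infty$, no distinction between the behaviours of $h_j$ at $+\infty$ and at $-\infty$ is needed (in contrast with the cyclic case), and a reverse Fatou's lemma --- licit because $h_j$ is bounded --- yields
\[
\limsup_{A_j \to +\infty} \int_0^{2\pi} h_j(|x(t)|)\sin(n_j t + \varphi)\,dt \le 2\overline{h}_j(+\infty) - 2\underline{h}_j(+\infty) = 2\Delta h_j,
\]
uniformly with respect to $\varphi \in [0,2\pi]$ and to the other initial data. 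This also explains the simpler form of $\Delta h_j$ adopted in this subsection.

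From this estimate I would extract, exactly as in the proof of Theorem~\ref{teo-asy-1}, a radius $A>0$ such that $x_j(0)^2 + x_j'(0)^2 \ge A^2$ implies, through \eqref{eq-deltaV}, that $V_{j,\varphi_j}(x_j(2\pi),x_j'(2\pi)) - V_{j,\varphi_j}(x_j(0),x_j'(0)) \ge \Gamma$ for all $\varphi_j \in I_j$. Setting $\overline{V}_j(A) = \max\{V_{j,\varphi}(\zeta,\eta): |(\zeta,\eta)| \le A,\ \varphi \in [0,2\pi]\}$ and iterating the one-step estimate, one obtains $V_{j,\varphi_j}(x_j(2k\pi),x_j'(2k\pi)) - V_{j,\varphi_j}(x_j(0),x_j'(0)) \ge k\Gamma$ for $k \in \mathbb{N}$ whenever the initial datum lies in the half-plane $\{V_{j,\varphi_j} > \overline{V}_j(A)\}$ for some $\varphi_j \in I_j$, whence $x_j(2k\pi)^2 + x_j'(2k\pi)^2 \to +\infty$ as $k \to +\infty$; the symmetric choice $\{V_{j,\varphi_j} < -\overline{V}_j(A)\}$ produces divergence as $k \to -\infty$. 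Choosing two angles $\varphi_j^1,\varphi_j^2 \in I_j$ with $0 < |\varphi_j^1 - \varphi_j^2| < \pi$ and defining $\mathcal{C}_j^{\pm}$ as the corresponding unions of half-planes, as in the cyclic proof, gives open sets that are reflex angles, with $\mathbb{R}^2 \setminus (\mathcal{C}_j^+ \cup \mathcal{C}_j^-)$ a compact parallelogram and $\mathcal{C}_j^+ \cap \mathcal{C}_j^-$ of infinite measure.

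Finally, to pass from divergence along the sequence $t = 2k\pi$ to the full limit $\lim_{t \to \pm\infty}(x_j(t)^2 + x_j'(t)^2) = +\infty$ claimed in the statement, I would invoke the energy estimate of Step~2 in the proof of Theorem~\ref{teo-glob-1}: the function $E_j(t) = \tfrac12 x_j'(t)^2 + \tfrac12 n_j^2 x_j(t)^2 + \tfrac12 M_j^2$ satisfies $|E_j'| \le E_j$, so Gronwall's lemma gives $E_j(t) \ge e^{-2\pi} E_j(2k\pi)$ on $[2k\pi, 2(k+1)\pi)$, and the discrete divergence upgrades to the continuous one. I expect the main technical obstacle to be the uniformity in the displayed Fatou estimate: one must ensure the bound holds uniformly both in the auxiliary phase $\varphi$ and in the remaining initial data, while the phase $\omega_j$ of the $j$-th component itself varies as $A_j \to +\infty$. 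This is handled by a compactness argument --- extracting from any maximizing sequence a subsequence along which $\omega_j$ and $\varphi$ converge, so that $|x(t)| \to +\infty$ for almost every $t$ and the reverse Fatou's lemma applies with a fixed limiting phase --- but it is the step requiring the most care.
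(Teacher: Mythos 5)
Your proposal follows essentially the same route as the paper's proof: the same choice of $\Gamma$ and interval $I_j$, the same Fatou-type estimate driven by the key observation $|x(t)| \ge |x_j(t)|$ (which is exactly why a hypothesis on the single index $j$ suffices and why only $\overline{h}_j(+\infty)$ and $\underline{h}_j(+\infty)$ enter), the same iteration of the one-step Lyapunov increment, the same construction of $\mathcal{C}_j^{\pm}$ from two half-planes, and the same Gronwall upgrade from the sequence $t=2k\pi$ to the full limit. Your added remarks on the reverse Fatou lemma and on the uniformity of the limsup estimate are sensible elaborations of points the paper treats tersely, but they do not change the argument.
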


Let us notice that, contrarily to Theorem \ref{teo-asy-1}, both the assumptions and the conclusions of Theorem \ref{teo-asy-2} refer to a component $x_j$ of the vector solution $x$ (from this point of view, the statement is more similar to the one of Theorem \ref{teo-glob-1}).

\begin{proof}
As in the proof of Theorem~\ref{teo-asy-1}, we fix $ \Gamma > 0$ and an open interval $ I_{j} \subset [0,2\pi]$  such that 
\begin{equation}\label{eq-FourierRad}
\int_{0}^{2\pi} p_{j}(t) \sin( n_{j}t+\varphi_{j} )\,dt > 2( \Delta h_{j} + \Gamma ),
\qquad \forall \varphi_{j}\in I_{j}.
\end{equation}
We claim that
\begin{equation}\label{eq-FatouRad}
\limsup_{A_{j}\to+\infty} \int_{0}^{2\pi}h_{j}(|x(t)|)\sin(n_{j}t+\varphi)\,dt \le 2\Delta h_{j},
\quad \text{uniformly w.r.t. } \varphi\in[0,2\pi] \text{ and } A_{i}, \text{ for } i\ne j.
\end{equation}
Again, by Fatou's lemma we have that
\[
\limsup_{A_{j}\to+\infty} \int_{0}^{2\pi}h_{j}(|x(t)|)\sin(n_{j}t+\varphi)\,dt \le
\int_{0}^{2\pi}\limsup_{A_{j}\to+\infty} [ h_{j}(|x(t)|)\sin(n_{j}t+\varphi) ]\,dt
\]
and estimate \eqref{eq-stimaC2} shows that for almost every $ t \in [0,\pi] $ we have that
$ |x(t)| \ge |x_{j}(t)|\to +\infty $ as $ A_{j}\to +\infty$ (still uniformly w.r.t. $A_{i}$, $i\ne j$).
Therefore, we deduce that
\[
\limsup_{A_{j}\to+\infty} [ h_{j}(|x(t)|)\sin(n_{j}t+\varphi) ] 
= \sin^{+}(n_{j}t+\varphi) \overline{h}_{j}(+\infty)
- \sin^{-}(n_{j}t+\varphi) \underline{h}_{j}(+\infty)
\]
for almost every $ t\in [0,2\pi] $ and  we obtain \eqref{eq-FatouRad}.

Thanks to \eqref{eq-FatouRad}, there is a constant $ A>0 $ such that
\[
x_{j}(0)^2+x'_{j}(0)^2\ge A^{2} \implies
\int_{0}^{2\pi}h_{j}(|x(t)|)\sin(n_{j}t+\varphi)\,dt\le 2\Delta h_{j}+\Gamma, 
\qquad \forall \varphi\in[0,2\pi].
\]
Hence, we deduce that
\[
x_{j}(0)^2+x'_{j}(0)^2\ge A^{2} \implies V_{j,\varphi_{j}}(x_{j}(2\pi),x'_{j}(2\pi)) - V_{j,\varphi_{j}}(x_{j}(0),x'_{j}(0)) \ge \Gamma,
\qquad \forall \varphi_{j} \in I_{j},
\]
by using \eqref{eq-deltaV} and \eqref{eq-FourierRad}.

Let us define $ \overline{V}_{j}(A) = \max\{ V_{j,\varphi}(\zeta,\eta):|(\zeta,\eta)|\le A \text{ and } \varphi\in[0,2\pi] \} $.
If the initial conditions satisfy $ V_{j,\varphi_{j}}(x_{j}(0),x'_{j}(0)) > \overline{V}_{j}(A) $ for
 some $\varphi_{j}\in I_{j} $,
then we have that $ x_{j}(0)^2+x'_{j}(0)^2\ge A^{2} $ and, hence,
\[
V_{j,\varphi_{j}}(x_{j}(2\pi),x'_{j}(2\pi))\ge V_{j,\varphi_{j}}(x_{j}(0),x'_{j}(0)) +\Gamma
> V_{j,\varphi_{j}}(x_{j}(0),x'_{j}(0)) \ge A.
\]
Therefore, we obtain recursively that
\[
V_{j,\varphi_{j}}(x_{j}(2k\pi),x'_{j}(2k\pi)) - V_{j,\varphi_{j}}(x_{j}(0),x'_{j}(0)) \ge k\Gamma,
\qquad \forall k\in\mathbb{N},
\]
which implies that $ x_{j}(2k\pi)^2 + x'_{j}(2k\pi)^2 \to +\infty $ as $ k \to +\infty $.

In a similar way, if the initial conditions satisfy $ V_{j,\varphi_{j}}(x_{j}(0),x'_{j}(0)) < -\overline{V}_{j}(A) $ for  some $\varphi_{j}\in I_{j} $, then we can show that
$ x_{j}(2k\pi)^2 + x'_{j}(2k\pi)^2 \to +\infty $ as $ k \to -\infty $.

Now, we define the open regions
\[
\begin{aligned}
\mathcal{C}_{j}^{+} & = \{ (\zeta,\eta)\in\RR : V_{j,\varphi_{j}^{1}}(\zeta,\eta) > \overline{V}_{j}(A) \}
\cup \{ (\zeta,\eta)\in\RR : V_{j,\varphi_{j}^{2}}(\zeta,\eta) > \overline{V}_{j}(A) \}, \\
\mathcal{C}_{j}^{-} & = \{ (\zeta,\eta)\in\RR : V_{j,\varphi_{j}^{1}}(\zeta,\eta) < -\overline{V}_{j}(A) \}
\cup \{ (\zeta,\eta)\in\RR : V_{j,\varphi_{j}^{2}}(\zeta,\eta) < -\overline{V}_{j}(A) \},
\end{aligned}
\]
and we conclude as in the proof of Theorem~\ref{teo-asy-1}.
\end{proof}

\vspace{1.5cm}

\noindent Authors' addresses:

\bigbreak

\begin{tabular}{l}
Alberto Boscaggin\\
Dipartimento di Matematica\\
Universit\`a di Torino\\
Via Carlo Alberto 10, I-10123 Torino, Italy\\
e-mail: alberto.boscaggin@unito.it\\
\\
Walter Dambrosio\\
Dipartimento di Matematica\\
Universit\`a di Torino\\
Via Carlo Alberto 10, I-10123 Torino, Italy\\
e-mail: walter.dambrosio@unito.it\\
\\
Duccio Papini\\
Dipartimento di Matematica, Informatica e Fisica\\
Universit\`a di Udine\\
Via delle Scienze 206, I-33100 Udine, Italy\\
e-mail: duccio.papini@uniud.it
\end{tabular}

\end{document}